\documentclass[11pt,reqno]{amsart}
\usepackage{amsmath,amsthm,amssymb,mathtools}
\usepackage{geometry}
\usepackage{tikz}
\usetikzlibrary{arrows.meta, positioning}
\usepackage{float}
\usepackage{graphicx}
\usepackage{hyperref}
\usepackage{xcolor}
\usepackage{xspace}

\title[A quadratic form generalization of rational dinv]{A quadratic form generalization of rational dinv}
\author{Yifeng Huang\\$\,$\\{\tiny with appendix by} Kenny Lau}
\date{\today}

\newtheorem{theorem}{Theorem}[section]
\newtheorem{lemma}[theorem]{Lemma}

\theoremstyle{remark}
\newtheorem{remark}[theorem]{Remark}
\newtheorem*{remark*}{Remark}

\begin{document}

\begin{abstract}
We introduce a quadratic form $Q$ on the space of functions on the gap poset $G$ of the numerical semigroup $\langle a,b\rangle$. We prove combinatorially that when evaluated on the indicator function of an upward closed subset $D$, this quadratic form precisely recovers the Gorsky--Mazin $\mathtt{dinv}$ statistic of $D$, viewed as a Young subdiagram of $G$. Furthermore, we prove Theorem~\ref{thm:bilinear-sum} that when evaluated on a pair of subdiagrams of $G$, the symmetric bilinear form associated with $Q$ is equal to a novel cross-$\mathtt{dinv}$ statistic, which is nonnegative. Combining these, we prove the inequality
\[ Q(\mathbf{n})\geq \dfrac{1}{|G|}\,\|\mathbf{n}\|_\infty^2\]
if $\mathbf{n}$ is a real-valued decreasing function on $G$, showing an effective positive definiteness of $Q$ on the corresponding cone. Theorem~\ref{thm:bilinear-sum}, the main engine of the paper, was autoformalized in Lean/Mathlib by AxiomProver.
\end{abstract}

\maketitle

\section{Introduction}

\subsection{Background and Motivation}
The classical $q,t$-Catalan numbers $C_n(q,t)$, introduced by Garsia and Haiman \cite{GH96}, are a crown jewel of algebraic combinatorics, defined as the sum over Dyck paths of bi-weighted statistics $\mathtt{area}$ and $\mathtt{dinv}$. Gorsky and Mazin \cite{GM13, GM14} generalized this theory to the rational $(a,b)$-Catalan numbers $C_{a,b}(q,t)$ for coprime positive integers $a<b$, corresponding to rational Dyck paths in an $a \times b$ rectangle. Along with Oblomkov, Rasmussen, and Shende \cite{ORS18}, they established profound geometric significance of these combinatorial objects by connecting the generating functions of these paths to the HOMFLY-PT homology of the $(a,b)$-torus knot, and algebro-geometrically, to the cohomology of the compactified Jacobian (or Hilbert scheme of points) of the singular plane curve $x^a = y^b$. 

In connecting Dyck paths to geometry, the gap set $G$ of the numerical semigroup $\Gamma=\langle a,b\rangle$ plays an important role. The function
\[ g(x,y)=ab-ax-by\]
induces a bijection from
\begin{equation}\label{eq:gap-diagram}
    \{(x,y)\in \mathbb{Z}_{\geq 1}^2:g(x,y)>0\}
\end{equation}
to $G$, realizing $G$ as a Young diagram consisting of boxes in an $a\times b$ rectangle strictly below the diagonal; this Young diagram is where rational Dyck paths live. 

In this paper, we consider a quadratic form arising from the high-rank generalization of the geometric setting. In forthcoming work \cite{HJO26+} with Ruofan Jiang and Alexei Oblomkov concerning the geometry of the Quot scheme of points on the curve $x^a=y^b$, the construction of an affine cell decomposition of the moduli space motivates a novel quadratic form that recovers the $\mathtt{dinv}$ statistic and generalizes it to higher ranks. Its definition is algebraic:
\[
Q(\mathbf{n})=\sum_{i,j\in G}  K(j-i)\,n_{i}n_{j},\]
defined for $\mathbf{n}=(n_i)_{i\in G}\in \mathbb{R}^G$,
where
\[K(d)=\mathbf{1}_{d\geq 0}-\mathbf{1}_{d\geq a}-\mathbf{1}_{d\geq b}+\mathbf{1}_{d\geq a+b}.
\]
It is conjectured that despite the negative signs, $Q$ is positive definite on the cone 
\[ C_{\mathbb{R}}=\{\mathbf{n}\in \mathbb{R}^G:n_i\geq 0, n_j\geq n_i\text{ if }j-i\in \langle a,b\rangle\}.\]

The significance of this conjecture in the context of \cite{HJO26+} is two-fold. First, it is proposed that finding a positive formula for $Q(\mathbf{n})$ for integer-valued vectors $\mathbf{n}\in C_\mathbb{R}$ should lead to a notion of a high-rank $\mathtt{dinv}$ for nested Dyck paths. Second, the positive definiteness is of fundamental importance in the convergence of an infinite $q$-series. In \cite{HJO26+}, it is proved that for a finite field $\mathbb{F}_q$, the ``number'' of finite-cardinality modules over the ring $R_{a,b}=\mathbb{F}_q[[T^a,T^b]]$ (the germ of the plane curve singularity) weighted inversely by the size of the automorphism groups (a.k.a. the groupoid volume) is given by an infinite sum:
\[ \begin{aligned}
    &\mathtt{groupoid\text{-}vol}(\mathbf{FinMod}_{R_{a,b}})\coloneq\sum_{ \mathbf{FinMod}_{R_{a,b}}/{\sim}} \frac{1}{|\mathrm{Aut}_{R_{a,b}}(M)|} \\
    &=  \left(\prod_{n=1}^\infty (1-z^n)^{-1} \right)Z_{a,b}(z)\coloneqq \left(\prod_{n=1}^\infty (1-z^n)^{-1} \right) \sum_{\mathbf{n}\in \mathbb{Z}^G\cap C_{\mathbb{R}}} z^{Q(\mathbf{n})}(1+O_{\mathbf{n}}(z))\in [0,\infty],
\end{aligned}\]
where $z=q^{-1}$ and $O_{\mathbf{n}}(z)$ is an explicit polynomial in nonnegative integer coefficients with zero constant term. Hence, the volume of $\mathbf{FinMod}_{R_{a,b}}$ is finite if and only if $Q(\mathbf{n})$ is strictly positive definite on $C_\mathbb{R}$, in which case $Z_{a,b}(z)$ is well-defined as a formal power series in $z$, and converges for $|z|<1$. The authors of \cite{HJO26+} then discovered and conjectured an infinite product expression for $Z_{a,b}(z)$, which amounts to a brand new bi-infinite family of sum$=$product identities of Rogers--Ramanujan type. The first new examples of the sum$=$product conjecture are proven in forthcoming work of the author with  Ken Ono and Peter Paule.

In this paper, we prove the conjecture that $Q$ is positive definite on $C_{\mathbb{R}}$. To achieve this goal, we connect it to $\mathtt{dinv}$, and prove that the symmetric bilinear form associated with $Q$ is positive semi-definite on $C_\mathbb{R}$, by interpreting certain evaluations in terms of ``cross-$\mathtt{dinv}$s'' of Dyck paths, a non-negative quantity we define combinatorially.

\subsection{Definitions and Main Results}
We treat $G$ as a poset with $i\preceq j$ if and only if $j-i\in \langle a,b\rangle$. We also treat $G$ as a Young diagram whose set of cells is defined by \eqref{eq:gap-diagram}. We orient the Young diagram so that $+x$ points to the east and $+y$ points to the north, and thus the unique maximal element of $(G,\preceq)$ lives at the \emph{southwest} corner. A subset $D \subseteq G$ is a \emph{subdiagram} (or by abuse of terminology, a \emph{Dyck path}) if it is an upward closed subset of $(G,\preceq)$, or equivalently, a Young subdiagram in the usual sense. 

Gorsky and Mazin defined the statistic $\mathtt{dinv}(D)$ for a Dyck path $D$. For each cell $c\in D$, let the arm and leg lengths of $c$ relative to $D$ be
\[ \mathrm{arm}_D(c)=\max\{k : c - ka \in D\}, \quad \mathrm{leg}_D(c)=\max\{k : c - kb \in D\}.\]
Then
\[
\mathtt{dinv}(D)=\#\left\{c\in D: \frac{\mathrm{leg}_D(c)}{\mathrm{arm}_D(c)+1} < \frac{a}{b} < \frac{\mathrm{leg}_D(c)+1}{\mathrm{arm}_D(c)}\right\}.
\]
For a subdiagram $D$, let $\mathbf{1}_D=(1_{i\in D})_i$ be the indicator vector of $D$. Our first main result realizes $\mathtt{dinv}$ as a first special case of the quadratic form $Q$.

\begin{theorem}\label{thm:dinv-match}
    For any subdiagram $D \subseteq G$, we have $Q(\mathbf{1}_D) = \mathtt{dinv}(D)$. Moreover, if $D\neq \varnothing$ then $Q(\mathbf{1}_D)>0$.\footnote{The last assertion also follows from the symmetry of $C_{a,b}(q,t)$, but we will give a direct proof.}
\end{theorem}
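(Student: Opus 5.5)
The plan is to expand $Q(\mathbf{1}_D)$ as a signed count of pairs of cells, cancel most of it, and match what remains with the Gorsky--Mazin cells cell by cell.

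\emph{Simplifying the kernel.} Since $K(d)=0$ for $d<0$ and $a<b$, for $d\ge 0$ the kernel becomes
\[ K(d)=\mathbf{1}_{0\le d<a}-\mathbf{1}_{b\le d<a+b}. \]
Hence
\[ Q(\mathbf{1}_D)=\#\{(i,j)\in D^2: 0\le j-i<a\}-\#\{(i,j)\in D^2: b\le j-i<a+b\}. \]
In the second set I substitute $j'=j-b$. Both sets then range over the same window $0\le j'-i<a$: the first with the condition $j'\in D$, the second with $j'+b\in D$. Here $j'$ is an arbitrary integer, possibly in $\Gamma$ or negative.

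\emph{Cancellation and the per-cell count.} Upward closedness of $D$ says that $j'\in D$ and $j'+b\in G$ imply $j'+b\in D$. So the common pairs cancel, and
\[ Q(\mathbf{1}_D)=\sum_{i\in D}\Big(\#\{j'\in[i,i+a): j'\in D,\ j'+b\notin G\}-\#\{j'\in[i,i+a): j'\notin D,\ j'+b\in D\}\Big). \]
Both sets are boundary phenomena of $D$ in the $b$-direction. I then fix $i$ and express each count through the positions, in the strip $[i,i+a)$ of length $a$, of the ``$b$-boundary'' of $D$ and of $G$. The $a$ residues mod $a$ of this strip correspond to the $a$ rows of the $a\times b$ rectangle, so along each $a$-chain the count reads off where $D$ ends.

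\emph{Matching with $\mathtt{dinv}$.} The next step is to reindex so that each summand is attributed to a single cell $c\in D$. I expect its contribution to be $1$ precisely when $\frac{\mathrm{leg}_D(c)}{\mathrm{arm}_D(c)+1}<\frac ab<\frac{\mathrm{leg}_D(c)+1}{\mathrm{arm}_D(c)}$, and $0$ otherwise. Concretely, I would use $g(x,y)=ab-ax-by$ to turn ``$c-ka\in D$'' and ``$c-lb\in D$'' into horizontal and vertical runs. The window condition $0\le j'-i<a$ then becomes the statement that the line of slope $a/b$ through a corner of $c$ separates the points $(\mathrm{arm},\mathrm{leg}+1)$ and $(\mathrm{arm}+1,\mathrm{leg})$. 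In effect this reproves the known equivalence between the arm--leg definition and a pair-counting definition of $\mathtt{dinv}$, and I would cite Gorsky--Mazin's pair formulation if it is allowed. This step is the main obstacle. The bookkeeping of which boundary cell is charged to which $c$, and the verification that the net signed count is exactly $0$ or $1$, is where the arithmetic (coprimality and the strict inequalities) has to be used carefully.

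\emph{Positivity.} If $D\ne\varnothing$, take a minimal element $c$ of $(D,\preceq)$. Then $c-a\notin D$ and $c-b\notin D$, so $\mathrm{arm}_D(c)=\mathrm{leg}_D(c)=0$. The defining condition reads $0<\frac ab<\infty$, which holds. So $c$ is counted in $\mathtt{dinv}(D)$, and hence $Q(\mathbf{1}_D)=\mathtt{dinv}(D)\ge 1$.
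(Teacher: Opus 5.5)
\paragraph{There is a genuine gap: the identity $Q(\mathbf 1_D)=\mathtt{dinv}(D)$ is never proved.} Your expansion is correct. It is essentially the paper's Lemma~\ref{lem:deficit} in integer language. The surviving positive terms are the bottom-row elements of $D$ in the window $[i,i+a)$. The negative terms are pairs $(i,j')$ with $j'\notin D$ sitting directly above a cell of $D$. The paper tidies this further, using that each window contains exactly one bottom-row value, into $Q(\mathbf 1_D)=|D|-|N|$, where $N$ is the set of arrows $i\to j$ from $D$ into the upper boundary $U_D$. Your positivity step is also fine once the identity is known, and it is slightly more direct than the paper's: a minimal cell has $\mathrm{arm}=\mathrm{leg}=0$, so $m=0<a/b<\infty=M$.

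The gap is the step in between. ``Reindex so that each summand is attributed to a single cell'' is exactly the content of the theorem, and you leave it as the main obstacle. It is not a cosmetic relabeling. Take $(a,b)=(4,7)$ and $D=\{17,13,9,5,10,3\}$ (row lengths $(4,1,1)$). Your per-$i$ net count is $1$ at $i\in\{9,10,13,17\}$ and $0$ at $i\in\{3,5\}$, while the $\mathtt{dinv}$ cells are $\{3,5,9,17\}$. Your two negative pairs, $(3,6)$ and $(5,6)$, must be re-attributed away from $3$ and $5$; the paper sends them to the non-$\mathtt{dinv}$ cells $10$ and $13$. An unspecified appeal to a Gorsky--Mazin ``pair formulation'' does not fill this. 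You would have to state that identity and match it with your signed count, which is the same work.

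\paragraph{The missing idea.} Split the negative pairs by direction, and biject each class with one way of violating the $\mathtt{dinv}$ inequalities.
\begin{itemize}
\item A \emph{blue} arrow $i\to j$, with $i\in D$ and $j\in U_D$ strictly northwest of $i$, goes to the cell $c$ in the row of $i$ and the column of $j$. Conversely, $j=c-b(\mathrm{leg}_D(c)+1)$ and $i=\mathrm{antiproj}_{\mathrm{row}(c)}(j)$ are forced. Then $i\in D$ if and only if $g(j)\ge g(c)-a\,\mathrm{arm}_D(c)$, that is, if and only if $M(c)\le a/b$.
\item A \emph{red} arrow, with $j$ strictly southeast of $i$, is first translated north to a parallel arrow $(i',j')$, where $i'$ is the top cell of $D$ in the column of $i$. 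It is then sent to the cell in the column of $i'$ and the row of $j'$. Conversely, $i'=c-b\,\mathrm{leg}_D(c)$ and $j'=\mathrm{proj}_{\mathrm{row}(c)}(i')$ are forced. A preimage exists if and only if $j'\notin D$, that is, if and only if $m(c)\ge a/b$.
\end{itemize}
Together these give $Q(\mathbf 1_D)=|D|-\#\{c\in D: M(c)\le a/b\}-\#\{c\in D: m(c)\ge a/b\}=\mathtt{dinv}(D)$.

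Your slope-line picture anticipates exactly these final inequalities. Without explicit maps and their inverses, however, the main equality is unproved, and the positivity conclusion rests on it.
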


In particular, this theorem implies that $Q(\mathbf{n})$ is nonnegative for a $0,1$-vector $\mathbf{n}\in C_\mathbb{R}$. To extend this positivity to the whole $C_{\mathbb{R}}$, we introduce a symmetric cross-\texttt{dinv} statistic $\mathtt{dinv}(D,E)$ for a pair of Dyck paths $D,E\subseteq G$. For a cell $c\in D\cap E$, define the small and large \emph{mixed cross hook slopes} as
\[ m_{D}^{E}(c)=\frac{\mathrm{leg}_{E}(c)}{\mathrm{arm}_{D}(c)+1} < M_{D}^{E}(c)=\frac{\mathrm{leg}_{E}(c)+1}{\mathrm{arm}_{D}(c)}.\]
Then define
\[ \mathtt{dinv}(D,E)=\frac{\#\left\{c\in D\cap E: m_D^E(c)<\dfrac{a}{b}<M_D^E(c)\right\}+\#\left\{c\in D\cap E: m_E^D(c)<\dfrac{a}{b}<M_E^D(c)\right\}}{2}.\]

By definition, it satisfies $0\leq \mathtt{dinv}(D,E)\leq |D\cap E|$ and $\mathtt{dinv}(D,D)=\mathtt{dinv}(D)$.

Our second main result recognizes the cross-\texttt{dinv} as a bilinear analogue of the classical \texttt{dinv}. Let $B(\mathbf{n},\mathbf{n'})$ be the unique symmetric bilinear form satisfying $B(\mathbf{n},\mathbf{n})=Q(\mathbf{n})$ (see Section~\ref{sec:bilinear-sum} for concrete formulas). 

\begin{theorem}\label{thm:bilinear-sum}
    For subdiagrams $D,E\subseteq G$, we have
    \[ B(\mathbf{1}_D,\mathbf{1}_E)=\mathtt{dinv}(D,E).\]
\end{theorem}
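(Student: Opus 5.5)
The plan is to prove the statement by first establishing an \emph{asymmetric} identity and then symmetrizing. Since $K(d)=\mathbf{1}_{0\le d<a}-\mathbf{1}_{b\le d<a+b}$, the symmetric form is
\[ B(\mathbf{1}_D,\mathbf{1}_E)=\tfrac12\bigl(S(D,E)+S(E,D)\bigr),\qquad S(D,E)=\sum_{i\in D}\sum_{j\in E}K(j-i). \]
It therefore suffices to show
\[ S(D,E)=\#\{c\in D\cap E: m_D^E(c)<a/b<M_D^E(c)\}, \]
possibly with the roles of $D$ and $E$ exchanged on the right. Either version gives Theorem~\ref{thm:bilinear-sum} after averaging. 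Setting $D=E$ also recovers Theorem~\ref{thm:dinv-match}, which serves as a consistency check on the bookkeeping.

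To evaluate $S(D,E)$, I would write the inner sum as
\[ f(i)=\#\{j\in E: i\le j<i+a\}-\#\{j\in E: i+b\le j<i+a+b\}. \]
The two windows differ by a shift of $b$. I would then telescope along $b$-chains, or dually along $a$-chains after rewriting $K(d)=(\mathbf{1}_{d\ge0}-\mathbf{1}_{d\ge b})-(\mathbf{1}_{d\ge a}-\mathbf{1}_{d\ge a+b})$. The upward-closedness of $D$ and $E$ in $(G,\preceq)$ means that $c\in D$ forces $c+a,c+b\in D$ whenever these are gaps. Using this, $S(D,E)$ collapses to a sum over cells $c\in D$ of a local quantity. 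This quantity involves only how far the $b$-chain of $c$ (through $c-b,c-2b,\dots$) stays inside $E$, namely $\mathrm{leg}_E(c)$, and how far the $a$-chain stays inside $D$, namely $\mathrm{arm}_D(c)$.

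The key geometric input is the coordinate picture $g(x,y)=ab-ax-by$. In it, the elements of $E$ lying in a window $[c,c+a)$ of length $a$ correspond to lattice cells met by a line of slope $a/b$ through $c$. Hence each window count is a floor expression of the form $\lfloor (\text{leg}+\theta)\,b/a\rfloor$, and the difference of two such counts is $0$ or $1$. I expect it to equal $1$ exactly when
\[ \frac{\mathrm{leg}_E(c)}{\mathrm{arm}_D(c)+1}<\frac ab<\frac{\mathrm{leg}_E(c)+1}{\mathrm{arm}_D(c)}. \]
Equivalently, the contribution of $c$ is $1$ precisely when the ray of slope $a/b$ from the corner of the hook passes between the arm end (measured in $D$) and the leg end (measured in $E$), as in the standard proof of the Gorsky--Mazin hook formula. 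I also need cells $c\in D\setminus E$ to contribute $0$, which should follow because both windows then lie in the complement of $E$ near $c$.

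The main obstacle is the boundary bookkeeping in the telescoping. Chains exit $G$ either through non-gaps (elements of $\Gamma$) or through negative numbers, and the subtracted window $[i+b,i+a+b)$ can stick out of $G$. I would first try to handle this by extending $\mathbf{1}_E$ to all of $\mathbb{Z}$ in the natural way: $1$ on $\Gamma$ and on $E$, $0$ elsewhere. This extended set is upward closed for $\Gamma$-translation, so the telescoping becomes clean. One then checks that the extra terms coming from non-gaps cancel in pairs between the two windows, because both windows contain the same number of elements of $\Gamma$ up to the shift. If this cancellation fails, the fallback is a direct sign-reversing involution on pairs $(i,j)\in D\times E$ with $K(j-i)=\pm1$, matching $-1$ pairs to $+1$ pairs by translating $j$ by $-b$, or $i$ by $+a$. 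The unmatched $+1$ pairs should then be exactly the cells counted by $\mathtt{dinv}$.
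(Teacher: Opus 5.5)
There is a genuine gap: the asymmetric identity you reduce to is false in either orientation, so the symmetrization is essential and cannot be treated as a formality. Take the example of Remark~\ref{rmk:counterexample}: $(a,b)=(3,5)$, $G=\{1,2,4,7\}$, $D=G$, $E=\{7\}$. Then
\[ S(D,E)=K(6)+K(5)+K(3)+K(0)=-1-1+0+1=-1,\qquad S(E,D)=K(0)=1. \]
At the only cell $c=7$ of $D\cap E$ you have $\mathrm{arm}_D(c)=2$, $\mathrm{leg}_D(c)=1$ and $\mathrm{arm}_E(c)=\mathrm{leg}_E(c)=0$. Hence $M_D^E(c)=1/2<3/5$ and $m_E^D(c)=1>3/5$, so both mixed counts are $0$. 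A count cannot equal $-1$, so neither $S(D,E)$ nor $S(E,D)$ is a mixed $\mathtt{dinv}$. Your expectation that each local contribution is $0$ or $1$ therefore fails. So does the fallback involution on $D\times E$ alone, because its unmatched pairs would have to include a $-1$ pair.

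What is true for one half is the following. Your $b$-telescoping does work and gives $S(D,E)=|D|-\#\{(i,j)\in D\times U_E: i\to j\}$. These arrows split into two classes:
\begin{itemize}
\item blue arrows (pointing northwest), which are in bijection with $\{c\in D\cap E: M_D^E(c)\le a/b\}$;
\item red arrows (pointing southeast, self-loops included), which are in bijection with $(D\setminus E)\sqcup\{c\in D\cap E: m_E^D(c)\ge a/b\}$.
\end{itemize}
Thus
\[ S(D,E)=|D\cap E|-\#\{c\in D\cap E: M_D^E(c)\le a/b\}-\#\{c\in D\cap E: m_E^D(c)\ge a/b\}. \]
This mixes the ``too flat'' failure for the slope pair $(m_D^E,M_D^E)$ with the ``too steep'' failure for the other pair $(m_E^D,M_E^D)$. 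A cell failing both contributes $-1$, exactly as $c=7$ does above.

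The missing idea is a cross-pairing between the two halves. Only after adding $S(E,D)$ can you group the flat failures for $(m_D^E,M_D^E)$, which come from arrows $D\to U_E$, with the steep failures for the same pair, which come from arrows $E\to U_D$. That regrouping yields $\mathtt{dinv}^E_D+\mathtt{dinv}^D_E$. Any argument that evaluates $S(D,E)$ by itself as a count of cells cannot succeed.
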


With little extra effort, Theorem~\ref{thm:bilinear-sum} settles the positive definiteness conjecture and reveals that the symmetric bilinear form, in fact, is already positive semi-definite on $C_{\mathbb{R}}$.

\begin{theorem}\label{thm:pos-def}
    For $\mathbf{n},\mathbf{n'}\in C_{\mathbb{R}}$, we have
    \( B(\mathbf{n},\mathbf{n'})\geq 0,\text{ so }Q(\mathbf{n})=B(\mathbf{n},\mathbf{n})\geq 0.\)
    Moreover, we have an effective bound
    \[ Q(\mathbf{n}) \ge \frac{1}{|G|} \|\mathbf{n}\|_{\infty}^2 \text{ for }\mathbf{n}\in C_{\mathbb{R}}. \]
\end{theorem}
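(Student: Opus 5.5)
Any $\mathbf{n}\in C_{\mathbb{R}}$ is a nonnegative combination of indicator vectors of subdiagrams, via the layer-cake decomposition. For $t>0$ set $D_t=\{i\in G: n_i\geq t\}$. Since $n_j\geq n_i$ whenever $i\preceq j$, each $D_t$ is upward closed, so it is a subdiagram. We have $\mathbf{n}=\int_0^\infty \mathbf{1}_{D_t}\,dt$.

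More concretely, order the distinct values of $\mathbf{n}$ as $0=v_0<v_1<\dots<v_r$ and put $D_k=\{n_i\geq v_k\}$. This gives a nested chain $G\supseteq D_1\supsetneq D_2\supsetneq\cdots\supsetneq D_r\neq\varnothing$ with
\[
\mathbf{n}=\sum_{k=1}^r (v_k-v_{k-1})\mathbf{1}_{D_k},
\]
where all coefficients are positive. Doing the same for $\mathbf{n'}$, with chain $E_l$ and weights $w_l-w_{l-1}$, and expanding bilinearly gives
\[
B(\mathbf{n},\mathbf{n'})=\sum_{k,l}(v_k-v_{k-1})(w_l-w_{l-1})\,B(\mathbf{1}_{D_k},\mathbf{1}_{E_l}).
\]
By Theorem~\ref{thm:bilinear-sum}, each $B(\mathbf{1}_{D_k},\mathbf{1}_{E_l})=\mathtt{dinv}(D_k,E_l)\geq 0$. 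This proves the first assertion.

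For the effective bound we take $\mathbf{n'}=\mathbf{n}$ and keep only the diagonal terms $k=l$ (same chain), together with the off-diagonal terms, which are nonnegative. The key point is that for nested $D\supseteq E$ we need a positive lower bound on $\mathtt{dinv}(D,E)$. I would aim to show $\mathtt{dinv}(D,E)\geq 1$ whenever $E\neq\varnothing$ and $E\subseteq D$, or at least $\geq 1/2$. Try the maximal cell $c_0$ (southwest corner) of $G$, which lies in every nonempty subdiagram. For $c_0$, going down by $a$ or $b$ leaves $G$, so $\mathrm{arm}=\mathrm{leg}=0$. Then $m=0<a/b<\infty=M$, and $c_0$ counts in both halves of $\mathtt{dinv}(D,E)$. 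Hence $\mathtt{dinv}(D,E)\geq 1$ for all nonempty $D,E$. I need to check that $\mathrm{arm}_D(c)=\max\{k: c-ka\in D\}$ really gives $0$ at the maximal element (it does, since $c_0-a\notin G$, as $c_0$ is the largest gap), and I must handle the convention $M=1/0=\infty$. This is the step I would double-check against the proof of Theorem~\ref{thm:dinv-match}, which presumably already contains the positivity argument.

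Granting this, every $D_k$ is nonempty, so every pair contributes at least its weight product, and
\[
Q(\mathbf{n})\geq\sum_{k,l}(v_k-v_{k-1})(v_l-v_{l-1})=v_r^2=\|\mathbf{n}\|_\infty^2.
\]
This is stronger than the claimed $\frac{1}{|G|}\|\mathbf{n}\|_\infty^2$. If the corner argument fails, the fallback is weaker. I would use $Q(\mathbf{1}_D)>0$ from Theorem~\ref{thm:dinv-match} to get $\mathtt{dinv}(D)\geq 1$ by integrality. I would then bound only the diagonal sum $\sum_k (v_k-v_{k-1})^2\geq v_r^2/r$ by Cauchy--Schwarz, and use $r\leq|G|$ to obtain exactly the stated bound. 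That fallback is presumably the authors' route, and it requires only the off-diagonal terms to be nonnegative, which the first part provides.
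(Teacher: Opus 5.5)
Your first part (layer-cake decomposition into nested nonempty subdiagrams, bilinear expansion, and $\mathtt{dinv}(D_k,E_l)\ge 0$ from Theorem~\ref{thm:bilinear-sum}) is exactly the paper's argument and is fine.

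\textbf{Your primary route to the effective bound fails.} The southwest corner $c_0=(1,1)$ is the \emph{maximal} element of $(G,\preceq)$. That says the values $g(c_0)+a$ and $g(c_0)+b$ lie in $\langle a,b\rangle$. But $\mathrm{arm}$ and $\mathrm{leg}$ look at $c_0-ka$ and $c_0-kb$, which are the cells east and north of $c_0$. Note that $g(c_0)-a=ab-2a-b$ can never lie in $\langle a,b\rangle$, so it is a gap whenever it is positive. Hence $\mathrm{arm}_D(c_0)$ is the length of the bottom row of $D$ minus one, and $\mathrm{leg}_D(c_0)$ is the height of its first column minus one. These are the largest values, not zero. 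The cells with $\mathrm{arm}_D=\mathrm{leg}_D=0$ are the poset-minimal cells of $D$, such as $\min D$ used in Lemma~\ref{lem:main-bijection}. They depend on $D$, so they give nothing for a pair $D\neq E$.

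In fact the claim $\mathtt{dinv}(D,E)\ge 1$ (or even $\ge 1/2$) for nonempty $D,E$ is false. Remark~\ref{rmk:counterexample} has $\mathtt{dinv}(G,\{7\})=0$ for $(a,b)=(3,5)$. At $c_0=7$ one has $\mathrm{arm}_G=2$, $\mathrm{leg}_G=1$ and $\mathrm{arm}_{\{7\}}=\mathrm{leg}_{\{7\}}=0$, so $M^{E}_{D}(c_0)=1/2<3/5$ and $m^{D}_{E}(c_0)=1>3/5$. Moreover, the strengthened bound $Q(\mathbf{n})\ge\|\mathbf{n}\|_\infty^2$ is itself false. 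Take $\mathbf{n}=\mathbf{1}_G+2\cdot\mathbf{1}_{\{7\}}$, which lies in $C_{\mathbb{R}}$. Then $Q(\mathbf{n})=\mathtt{dinv}(G)+4\cdot 0+4\,\mathtt{dinv}(\{7\})=4+4=8<9=\|\mathbf{n}\|_\infty^2$; you can confirm this directly from the kernel $K$.

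\textbf{Your fallback is correct and is precisely the paper's proof.} You discard the off-diagonal terms, which are nonnegative by the first part. You use $\mathtt{dinv}(D_k)\ge 1$ from the positivity assertion of Theorem~\ref{thm:dinv-match} plus integrality. Then Cauchy--Schwarz with $r\le|G|$ and $\|\mathbf{n}\|_\infty=\sum_k(v_k-v_{k-1})$ finishes it. Your telescoping is a slightly quicker way to see this last identity than the paper's southwest-corner remark. So the proposal proves the statement only through the fallback. The corner argument should be dropped rather than presented as giving a stronger result.
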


As an application, to compute $Z_{a,b}(q)=\sum_{\mathbf{n}\in \mathbb{Z}^G\cap C_{\mathbb{R}}} q^{Q(\mathbf{n})}(1+O_{\mathbf{n}}(q))$ up to the $q^N$-coefficient, it suffices to sum over $\mathbf{n}\in \mathbb{Z}^G\cap C_{\mathbb{R}}$ such that
\[ \lVert \mathbf{n}\rVert_\infty \leq \sqrt{N|G|},\] 
a finite set of vectors. Hence, $Z_{a,b}(q)$ converges formally as a power series.

\begin{remark}\label{rmk:counterexample}
    It is possible that $B(\mathbf{n},\mathbf{n'})=0$ for $\mathbf{n},\mathbf{n'}\in C_\mathbb{R}\setminus \{0\}$. For example, if $(a,b)=(3,5)$, $D=G=\{1,2,4,7\}$, and $E=\{7\}$, then $B(\mathbf{1}_D,\mathbf{1}_E)=0$.
\end{remark}

Finally, we raise the question of whether the cross-$\mathtt{dinv}$ and the quadratic form $Q$ have richer connections to algebraic combinatorics. Naturally, systems of $n$ nested Dyck paths are in bijection with vectors in $C_\mathbb{R}$ with values in $\{0,1,\dots,n\}$:
\[ \mathbf{D}=(D_1\subseteq \dots \subseteq D_n) \leadsto \mathbf{n}_\mathbf{D}\coloneq \mathbf{1}_{D_1}+\dots+\mathbf{1}_{D_n}.\]
(Indeed, the inverse map is given by sending $\mathbf{n}$ to $\mathbf{D}$ such that $D_i=\{g:n_g\geq i\}$.) As such, it makes sense to define a ``high-rank'' \texttt{dinv} for nested Dyck paths:
\[ \mathtt{dinv}(\mathbf{D}):=Q(\mathbf{n}_\mathbf{D})=\sum_{i,j=1}^n \mathtt{dinv}(D_i,D_j),\]
viewing the classical \texttt{dinv} as the rank-$1$ special case. We wonder how it compares to existing notions of high-rank or nested \texttt{dinv}, e.g., \cite{LW08}, which appears to be different.

\section{Preliminaries}
Fix integers $1<a<b$ with $\gcd(a,b)=1$. From now on, we work on the infinite grid $\mathbb{Z}^2$. We exclusively treat $G$ as the set
\begin{equation*}
    G=\{(x,y)\in \mathbb{Z}_{\geq 1}^2:g(x,y)>0\}.
\end{equation*}
Recall the function $g:\mathbb{Z}^2\to \mathbb{Z}$ defined by $g(x,y)=ab-ax-by$. We call $g(x,y)$ the \emph{value} of the cell $(x,y)$. Though $g$ is not globally injective, we often use the value of a cell to refer to the cell itself if there is no ambiguity. For example, if $c\in G$, we use $c-ka$ to refer to the cell $k$ steps to the east of $c$, even when it is outside $G$ and even though there are infinitely many cells in $\mathbb{Z}^2$ with the same value. In other words, we treat $a,b$ as vectors: $a=(-1,0), b=(0,-1)$.

We will work with the following reformulation of $Q$:
\[
Q(\mathbf{n})=\sum_{i,j\in G} \Bigl(\mathbf{1}_{0\le g(j)-g(i)<a}-\mathbf{1}_{b\le g(j)-g(i)<a+b}\Bigr)n_i n_j.
\]
It motivates the following notation for $i,j\in \mathbb{Z}^2$:
\[ i\to j \text{ if and only if }0\leq g(j)-g(i)<a.\]
Since $a<b$, other than self-loops, arrows $i\to j$ can only point strictly from northwest to southeast, or vice versa. (Indeed, in any other case, the absolute difference $|g(j)-g(i)|$ would be at least $a$.)

Given a cell $i\in \mathbb{Z}^2$ and a row index $r\in \mathbb{Z}$, there are unique members $j,k\in \{(x,y)\in \mathbb{Z}^2:y=r\}$ such that $i\to j$ and $k\to i$. We denote these cells by 
\[ j=\mathrm{proj}_r(i) \text{ and } k=\mathrm{antiproj}_r(i),\]
called the \emph{projection} and the \emph{antiprojection} of $i$ onto the $r$-th row. The unique existence is simply a consequence of division by $a$ with remainder; in fact, $j$ is the lowest-valued cell in row $r$ with value at least $g(i)$, and $k$ is the highest-valued cell in row $r$ with value at most $g(i)$.

As an important property, if $c\in G$ and $r$ is a row of $G$ non-strictly to the south of $c$ (i.e., $1\leq r \le \mathrm{row}(c)$), then $\mathrm{proj}_r(c)$ is in $G$; see Figure~\ref{fig:proj}. To verify this, we check each condition in the definition of $G$. That $\mathrm{proj}_r(c) \in \mathbb{Z}_{\geq 1}^2$ is automatic as $r$ is a row of $G$ and $\mathrm{proj}_r(c)$ is to the southeast of $c$. That $g(\mathrm{proj}_r(c))>0$ is simply because $g(\mathrm{proj}_r(c))\ge g(c)$.

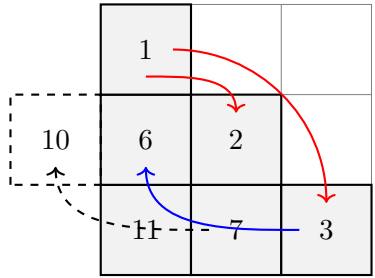
\begin{figure}[h]
    \centering
    \begin{tikzpicture}[scale=1.2]
        \draw[step=1cm,gray,very thin] (0,0) grid (3,3);
        \foreach \x/\y/\v in {0/0/11, 1/0/7, 2/0/3, 0/1/6, 1/1/2, 0/2/1} {
            \draw[thick, fill=gray!10] (\x,\y) rectangle (\x+1,\y+1);
            \node at (\x+0.5,\y+0.5) {\v};
        }
        \draw[thick, dashed] (-1,1) rectangle (0,2);
        \node at (-0.5,1.5) {10};

        \draw[->, thick, red] (0.5, 2.2) to[out=0,in=90] (1.5, 1.8); 
        \draw[->, thick, red] (0.8, 2.5) to[out=0,in=90] (2.5, 0.8); 
        \draw[->, thick, blue] (2.2, 0.5) to[out=180,in=270] (0.5, 1.2); 
        \draw[->, thick, dashed] (1.2, 0.5) to[out=180,in=270] (-0.5, 1.2); 
    \end{tikzpicture}
    \caption{For $(a,b)=(4,5)$, the projection of $1$ to other rows are $2$ and $3$. The projection of $3$ to the middle row (row 2) is $6$. The projection of $7$ to the middle row is $10$, which lies outside $G$.}
    \label{fig:proj}
\end{figure}

\section{Proof of Theorem \ref{thm:dinv-match}}
Though Theorem~\ref{thm:bilinear-sum} implies the main assertion of Theorem~\ref{thm:dinv-match}, we give a separate proof of Theorem~\ref{thm:dinv-match} here, as it motivates the proof method of Theorem~\ref{thm:bilinear-sum}.

\subsection{Restatement of $Q(D)$}
As the first step, we give a combinatorial reformulation of the evaluation $Q(D)\coloneq Q(\mathbf{1}_D)$. Let $U=U_D$ be the set of upper boundary cells just above $D\cup (\mathbb{Z}_{\geq 1}\times \mathbb{Z}_{\leq 0})$:
\begin{equation}\label{eq:U}
    U=U_D=\{(x,y): (x,y)\notin D \text{ and } (x,y-1) \in D \cup (\mathbb{Z}_{\geq 1}\times \mathbb{Z}_{\leq 0})\}.
\end{equation}

\begin{lemma}\label{lem:deficit}
We have 
    \[ Q(D)=|D|-\#\{(i,j)\in D\times U: i\to j\}.\]
\end{lemma}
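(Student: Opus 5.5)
Write $Q(D)=\#\{(i,j)\in D^2: i\to j\}-\#\{(i,j)\in D^2: 0\le g(j)-g(i)-b<a\}$, using the reformulation of $Q$ from the Preliminaries. Here $g(j)-g(i)\ge b$ means $j$ lies strictly south of $i$. The plan is to organize both counts row by row using projections.

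For the first term, fix $i\in D$ and a row $r$. There is exactly one $j$ in row $r$ with $i\to j$, namely $\mathrm{proj}_r(i)$. For the second term, the condition $0\le g(j)-(g(i)+b)<a$ says $j=\mathrm{proj}_r(i-b)$, where $i-b$ is the cell directly below $i$. This suggests grouping by the vertical column above a boundary cell. For $i\in D$, the cells $i, i-b, i-2b,\dots$ go downward, and since $D$ is a Young subdiagram (upward closed means closed under moving southwest/down within $G$), the cells of this column below $i$ are in $D$ down to row $1$, and then leave $G$ into $\mathbb{Z}_{\ge1}\times\mathbb{Z}_{\le0}$.

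I would pair terms as follows. For each $i\in D$ and each $j'=\mathrm{proj}(i-b)$ hit, consider instead the arrow $i'\to j'$ with $i'=i-b$, shifted by one row. More cleanly, telescope along columns. Define for each cell $c$ in the column strip $\mathbb{Z}_{\ge1}$-columns the count $N(c)=\#\{j\in D: c\to j\}$. Then the first count is $\sum_{i\in D}N(i)$ and the second is $\sum_{i\in D}N(i-b)$. Within one column $x$, $D$ occupies rows $1,\dots,h_x$. So the difference telescopes to
\[\sum_x\bigl(N((x,h_x))-N((x,0))\bigr),\]
which I would rewrite as $\sum_x \sum_{y=1}^{h_x}\bigl(N((x,y))-N((x,y-1))\bigr)$.

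The main step is identifying this with $|D|-\#\{(i,j)\in D\times U: i\to j\}$. I would swap the roles of $i$ and $j$: count the arrows $i\to j$ with $i\in D$ by target row. For a fixed $i\in D$ and a column $x'$ of $j$, consider the vertical segment of column $x'$ consisting of cells of $D$ together with the cells below $G$ (rows $\le 0$); its top is capped by exactly one $U$-cell. The arrows from $i$ land once per row. For each $i$ I would show that $\#\{j\in D: i\to j\}-\#\{j\in D: i-b\to j\}$ equals $\mathbf{1}_{i\in D}$ (the self-loop) minus $\#\{u\in U: i\to u\}$. The cleanest route is to use the reversed viewpoint: $i-b\to j$ iff $i\to j+b$, i.e.\ $j+b$ is the cell directly above $j$. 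So the second count is $\#\{(i,j'): i\in D,\ i\to j',\ j'-b\in D\}$, and the difference is
\[\#\{(i,j)\in D^2: i\to j,\ j-b\notin D\}-\#\{(i,j'): i\in D,\ i\to j',\ j'\notin D,\ j'-b\in D\}.\]
I would then split by whether $j-b$ is in the region below $G$: cells $j\in D$ in row 1 have $j-b$ in row $0$. So the first set consists of arrows into the bottom row of $D$. The second set consists of arrows into $U$-cells not in the bottom-row portion.

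Adding back boundary corrections in row $1$/row $0$ should produce exactly $|D|-\#(D\times U\text{ arrows})$. For this I need that each $i\in D$ has exactly one arrow into each relevant column-top configuration. This uses the bottom row $1$: every $i\in D$ projects into row $1$ at a cell of $G$ (the projection property from the Preliminaries), and that cell lies in $D$ by upward closure. I expect this bookkeeping of row $0$ and row $1$ against $U$, including $U$-cells above row-$0$ cells in columns where $D$ is empty, to be the main obstacle. I would handle it by extending $D$ to $\tilde D=D\cup(\mathbb{Z}_{\ge1}\times\mathbb{Z}_{\le0})$ and checking that arrows from $D$ into $\tilde D\setminus D$ contribute identically to both counts.
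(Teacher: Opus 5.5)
Up to its last step your argument is the paper's in different notation. You move the target of the second count up one row, cancel the common part, and identify what is left as
\[
\#\{(i,j)\in D\times(D\cap B):i\to j\}-\#\{(i,j)\in D\times(U\setminus B):i\to j\},\qquad B=\mathbb{Z}_{\ge1}\times\{1\}.
\]
This is the paper's pointwise identity $\mathbf{1}_{j\in D}-\mathbf{1}_{j+b\in D}=\mathbf{1}_{j\in B}-\mathbf{1}_{j\in U}$ summed over arrows. The column telescoping in $i$ is a detour you never use.

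The gap is in how you close it. You claim that $\mathrm{proj}_1(i)$ lies in $D$ ``by upward closure''. This is false. For $i$ not in row $1$, $\mathrm{proj}_1(i)$ lies strictly southeast of $i$, and southeast is not $\succeq$-above. Take $(a,b)=(3,5)$ and $D=\{7,2\}$, the first column of $G$. Then $\mathrm{proj}_1(2)=4\notin D$. Your claim would make the first term above equal $|D|=2$, giving $2\neq 1=Q(D)$; in fact the arrow $2\to 4$ lands in $U\cap B$.

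What you need is the complementary fact. Each $i\in D$ has exactly one arrow into $B$. Every cell of $B$ lies either in $D$ or in $U$, the latter being the row-$1$ cells of empty columns, so $B\setminus D=U\cap B$ and
\[
\#\{(i,j)\in D\times(D\cap B):i\to j\}=|D|-\#\{(i,j)\in D\times(U\cap B):i\to j\}.
\]
Adding the $U\setminus B$ term gives the lemma. This is also where the $+1$ in your per-$i$ identity comes from: it is the unique arrow from $i$ into row $1$, not the self-loop.

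Your fallback of passing to $\tilde D=D\cup(\mathbb{Z}_{\ge1}\times\mathbb{Z}_{\le0})$ does not repair this as stated. Both counts become infinite, since each $i\in D$ sends one arrow into every row $r\le 0$ in each count. So ``contribute identically'' is a cancellation of divergent sums, and its value depends on how rows are matched. Matching row by row recovers $Q(D)$. The one-row shift you use for the ``move the target up'' step instead yields $-\#\{(i,j)\in D\times U:i\to j\}$, which is off by exactly the $|D|$ you are trying to produce.

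Adjoining only the single row $\mathbb{Z}_{\ge1}\times\{0\}$ does work. Each count gains exactly $|D|$, one arrow per $i$ into row $0$ either way. The shifted cancellation then leaves $\#\{(i,j)\in D\times(\mathbb{Z}_{\ge1}\times\{0\}):i\to j\}-\#\{(i,j)\in D\times U:i\to j\}$, which equals $|D|-\#\{(i,j)\in D\times U:i\to j\}$.
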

\begin{proof}
By definition,
\[
Q(D)=\sum_{i,j\in D} \mathbf{1}_{i\to j}-\mathbf{1}_{i\to j-b}=\sum_{\substack{(i,j)\in D\times \mathbb{Z}^2\\i\to j}} \mathbf{1}_{j\in D}-\mathbf{1}_{j+b\in D}.
\]
For each $j\in \mathbb{Z}^2$, we claim that 
\begin{equation}\label{eq:reduction to boundary}
    \mathbf{1}_{j\in D}-\mathbf{1}_{j+b\in D}=\mathbf{1}_{j\in B}-\mathbf{1}_{j\in U},
\end{equation}
where $B=\mathbb{Z}_{\geq 1}\times \{1\}$ is the bottom row in the first quadrant.

We verify the claim in four cases:
\begin{itemize}
    \item $j\notin B\cup U$: Either $j,j+b\in D$ or $j,j+b\notin D$, getting $0=0$.
    \item $j\in B\setminus U$: Then $j\in B\cap D$, so $j\in D$ and $j+b\in D$, getting $1=1$.
    \item $j\in U\setminus B$: Then $j\in G\setminus D$ and $j+b\in D$ by the definition of $U$, getting $-1=-1$.
    \item $j\in U\cap B$: Then $j\in B\setminus D$, so $j\notin D$ and $j+b\notin D$, getting $0=0$.
\end{itemize}

It follows that
\[ Q(D)=\sum_{\substack{(i,j)\in D\times \mathbb{Z}^2\\i\to j}} \mathbf{1}_{j\in B}-\mathbf{1}_{j\in U} = \#\{(i,j)\in D\times B: i\to j\} - \#\{(i,j)\in D\times U: i\to j\}. \]

But for each $i\in D$, there is a unique $j\in B$, namely, $j=\mathrm{proj}_1(i)$, such that $i\to j$. We get
\[ \#\{(i,j)\in D\times B: i\to j\} = |D|,\]
completing the proof.
\end{proof}

\subsection{Setting up the bijection}
Let $N = \{(i,j)\in D\times U: i\to j\}$. By Lemma~\ref{lem:deficit}, $Q(D)=|D|-|N|$, so to find $Q(D)$, it suffices to find $|N|$. For $(i,j)\in N$, $i$ and $j$ cannot belong to the same row since $D\cap U=\varnothing$. We divide $N$ into two parts:
\begin{itemize}
    \item $N_b$: the set of \emph{blue arrows} $(i,j)\in N$ where $j$ is strictly north (thus strictly northwest) of $i$. 
    \item $N_r$: the set of \emph{red arrows} $(i,j)\in N$ where $j$ is strictly south (thus strictly southeast) of $i$. 
\end{itemize}

\begin{figure}[h]
    \centering
    \begin{tikzpicture}[scale=1.2]
        \foreach \x/\y in {0/0, 1/0, 2/0, 3/0, 0/1, 0/2} {
            \fill[yellow!30] (\x,\y) rectangle (\x+1,\y+1);
            \draw[thick] (\x,\y) rectangle (\x+1,\y+1);
        }
        \foreach \x/\y in {4/0, 1/1, 2/1} {
            \draw[thick, fill=white] (\x,\y) rectangle (\x+1,\y+1);
        }
        \foreach \x/\y/\v in {0/0/17, 1/0/13, 2/0/9, 3/0/5, 4/0/1, 0/1/10, 1/1/6, 2/1/2, 0/2/3} {
            \node at (\x+0.5,\y+0.5) {\v};
        }
        \draw[->, thick, blue] (3.5, 0.8) to[out=135,in=315] (1.8, 1.2); 
        \draw[->, thick, red]  (0.8, 2.2) to[out=315,in=135] (1.2, 1.8); 
    \end{tikzpicture}
    \caption{For $(a,b)=(4,7)$, if $D$ has row lengths $(4,1,1)$, then $U\cap G = \{1,6,2\}$. The arrow $(5,6)$ is the only blue arrow and $(3,6)$ is the only red arrow. Hence $Q(D)=|D|-1-1=4$.}
    \label{fig:arrows}
\end{figure}
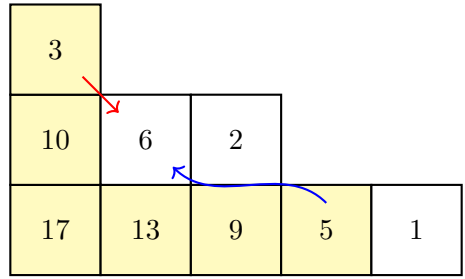

On the other hand, we divide the set of cells in $D$ violating the $\mathtt{dinv}$ condition into two parts. Recall the Gorsky--Mazin formulation of $\mathtt{dinv}$:
\[
\mathrm{dinv}(D)=\#\left\{c\in D: m_D(c) < \frac{a}{b} < M_D(c)\right\},
\]
where we denote the \emph{small hook slope} and \emph{large hook slope} respectively as:
\[ m(c)=\frac{\mathrm{leg}_D(c)}{\mathrm{arm}_D(c)+1},\quad M(c)=\frac{\mathrm{leg}_D(c)+1}{\mathrm{arm}_D(c)}.\]
Since $m(c)<M(c)$, the inequalities can be violated in two ways.
Define the set of \emph{blue cells} and \emph{red cells} respectively as:
\[ D_b=\left\{ c\in D: m(c)<M(c)\leq \frac{a}{b}\right\}, \quad D_r=\left\{ c\in D: \frac{a}{b}\leq m(c)<M(c)\right\}. \]
Since $a,b$ are coprime, the inequalities are strict. As a result, $\mathrm{dinv}(D)=|D|-|D_b|-|D_r|$. Intuitively, the hook of a blue cell is too flat, and the hook of a red cell is too steep. 

\begin{lemma}\label{lem:main-bijection}
    Let $a,b,D$ be fixed in the notation above. Then there are bijections
    \[ \Phi_b:N_b\to D_b,\quad \Phi_r:N_r\to D_r.\]
    Moreover, if $D\neq \varnothing$, let $d=\min D$. Then $d\in D\setminus(D_b\sqcup D_r)$. 
\end{lemma}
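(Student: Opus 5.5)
The plan is to build both bijections explicitly, reading each arrow as data attached to the hook of a single cell, and then to check the statement about $d=\min D$ directly. Throughout, $\mathrm{arm}_D(c)$ counts the cells of $D$ strictly east of $c$ in its row, and $\mathrm{leg}_D(c)$ counts those strictly north of $c$ in its column. This is because $c-ka$ is $k$ steps east and $c-kb$ is $k$ steps north, and $D$ is a Young diagram anchored at the southwest corner.

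\emph{Blue case (the part I can carry out concretely).} Fix $c\in D_b$ and put $\ell=\mathrm{leg}_D(c)$. Let $u=c-(\ell+1)b$ be the first cell above the column of $c$ that is not in $D$; by \eqref{eq:U} we have $u\in U$. Set
\[\Psi_b(c)=(i,u),\qquad i=\mathrm{antiproj}_{\mathrm{row}(c)}(u).\]
Then $i=c-ma$ with $m=\lceil (\ell+1)b/a\rceil$. The blue condition $b(\ell+1)<a\,\mathrm{arm}_D(c)$ gives $m\le \mathrm{arm}_D(c)$, so $i\in D$. Moreover $u$ lies strictly north of $i$, so $(i,u)\in N_b$. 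For the inverse, take $(i,j)\in N_b$ and let $c$ be the cell in the row of $i$ and the column of $j$. Since $j$ is northwest of $i$, the cell $c$ is west of $i$ in the same row, hence $c\in D$. Also $j\in U$ is exactly the first non-$D$ cell above the column of $c$, so $j=c-(\mathrm{leg}_D(c)+1)b$. The uniqueness of antiprojections forces $i=c-ma$ with $m$ as above, and $i\in D$ gives $b(\ell+1)\le a\,\mathrm{arm}_D(c)$. Equality is impossible: $1\le \ell+1\le (\text{column height})<a$ and $\gcd(a,b)=1$. So $c\in D_b$, and the two maps are mutually inverse. We then set $\Phi_b=\Psi_b^{-1}$.

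\emph{Red case (main obstacle).} The red condition reads $b\,\mathrm{leg}_D(c)\ge a(\mathrm{arm}_D(c)+1)$, and in a red arrow the target $j$ lies strictly southeast of $i$. The symmetry between the two cases is imperfect. Arrows always go from lower to higher value, and $U$ is defined column-wise rather than row-wise, so simply transposing the blue construction does not produce a cell of $U$.

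My first attempt is the following inverse map. Given $(i,j)\in N_r$, let $c$ be the cell in the column of $i$ and the row of $j$. This cell is south of $i$, so $c\in D$, and $j$ is a non-$D$ cell east of $c$ in the same row. One then has to show that $\mathrm{arm}_D(c)+1\le$ (the horizontal distance from $c$ to $j$), using $j\notin D$. Next, using $0\le g(j)-g(i)<a$, one converts the vertical distance $\mathrm{row}(i)-\mathrm{row}(j)\le \mathrm{leg}_D(c)$ into the red inequality.

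The delicate point is injectivity and surjectivity. Several cells $j$ east of $c$ might lie in $U$, so I would need to show that exactly one of them, namely the one reached by the arrow from the topmost possible $i$ in that column, is realized. If this fails, the fallback is to index red arrows instead by their source $i=c-\mathrm{leg}_D(c)\,b$, the top cell of the column of $c$. In that version one follows $\mathrm{proj}_r(i)$ down through the rows $r\le \mathrm{row}(c)$ and picks the unique row at which the projection first enters $U$. Existence of that row would come from the red inequality together with the fact that the projection onto row $1$ lies in $B$. Uniqueness would come from monotonicity of the projections in the row index.

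\emph{The minimum.} Finally, let $d=\min D$. It has no cell of $D$ strictly to its east or north, since such a cell would be smaller in $\preceq$. Hence $\mathrm{arm}_D(d)=\mathrm{leg}_D(d)=0$, which gives $m(d)=0<a/b<\infty=M(d)$. So $d\notin D_b\sqcup D_r$, and in particular $\mathtt{dinv}(D)\ge1$ whenever $D\ne\varnothing$.
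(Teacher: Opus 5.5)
Your blue bijection is correct and matches the paper's construction. Your argument for $d=\min D$ is also fine, reading ``east or north'' as due east and due north: $\mathrm{arm}_D(d)=\mathrm{leg}_D(d)=0$ gives $m(d)=0<a/b<\infty=M(d)$. This is slightly more direct than the paper, which argues that a blue or red $d$ would produce a smaller cell of $D$ due east or due north through the constructions of $\Phi_b,\Phi_r$.

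The red case, however, has a genuine gap: neither of your candidate maps is a bijection. Take $(a,b)=(5,7)$ and let $D$ be the hook of Figure~\ref{fig:main-bijection}, with cells $23,18,13,8,3$ in row $1$ and $16,9,2$ above them in column $1$. Then $N_r=\{(9,11),(2,6)\}$ and $D_r=\{16,9\}$.
\begin{itemize}
\item Your first map $(i,j)\mapsto(\text{column of } i,\ \text{row of } j)$ does land in $D_r$; your inequality chain is right. But it sends both arrows to $16$ and misses $9$. Arrows with different sources in one column can reach $U$ in the same row, so the hoped-for ``exactly one realized'' target does not exist.
\item Your fallback starts from the right pair $(i',\mathrm{proj}_{\mathrm{row}(c)}(i'))$ with $i'=c-\mathrm{leg}_D(c)\,b$. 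It then keeps the source frozen at the column top and re-projects onto lower rows. Such a search can only output arrows whose source is a column top, so $(9,11)$ is never reached. Concretely, for both $c=9$ and $c=16$ the first projection of $2$ landing in $U$ is $\mathrm{proj}_2(2)=6$, so both go to $(2,6)$.
\end{itemize}

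The missing idea is translation invariance of arrows: since $g$ is affine, $i\to j$ if and only if $i+v\to j+v$. So you should slide the whole arrow vertically rather than re-project a fixed source.
\begin{itemize}
\item Forward map: given $(i,j)\in N_r$, shift it up until $i$ becomes the top cell $i'$ of its column, giving $(i',j')$. Let $c$ be the cell in the column of $i$ and the row of $j'$.
\item Inverse: from $c$, set $i'=c-\mathrm{leg}_D(c)\,b$ and $j'=\mathrm{proj}_{\mathrm{row}(c)}(i')$. The condition $j'\notin D$ is exactly $b\,\mathrm{leg}_D(c)\ge a(\mathrm{arm}_D(c)+1)$. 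When it holds, slide $(i',j')$ down until $j'$ reaches the unique cell of $U$ in its column.
\item The source stays in $D$ throughout, since it remains in the column of $i'$, strictly above a target lying in a row $\ge 1$.
\end{itemize}
In the example this gives $9\mapsto(9,11)$ via $(i',j')=(2,4)$, and $16\mapsto(2,6)$. The uniqueness of the $U$-cell in each column is what makes this slide well defined and invertible. Your row-by-row search does not use that uniqueness.
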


\begin{proof}
    See Figure~\ref{fig:main-bijection} for a visualization.

    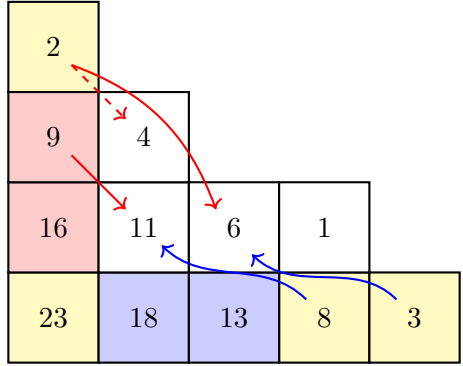
\begin{figure}[htbp]
    \centering
    \begin{tikzpicture}[scale=1.2]
        \foreach \x/\y in {0/0, 3/0, 4/0, 0/3} {
            \fill[yellow!30] (\x,\y) rectangle (\x+1,\y+1);
            \draw[thick] (\x,\y) rectangle (\x+1,\y+1);
        }
        \foreach \x/\y in {1/0, 2/0} {
            \fill[blue!20] (\x,\y) rectangle (\x+1,\y+1);
            \draw[thick] (\x,\y) rectangle (\x+1,\y+1);
        }
        \foreach \x/\y in {0/1, 0/2} {
            \fill[red!20] (\x,\y) rectangle (\x+1,\y+1);
            \draw[thick] (\x,\y) rectangle (\x+1,\y+1);
        }
        \foreach \x/\y in {1/1, 2/1, 3/1, 1/2} {
            \draw[thick, fill=white] (\x,\y) rectangle (\x+1,\y+1);
        }
        \foreach \x/\y/\v in {0/0/23, 1/0/18, 2/0/13, 3/0/8, 4/0/3, 
                              0/1/16, 1/1/11, 2/1/6, 3/1/1,
                              0/2/9, 1/2/4, 0/3/2} {
            \node at (\x+0.5,\y+0.5) {\v};
        }
        
        \draw[->, thick, blue] (3.3, 0.7) to[out=135,in=315] (1.7, 1.3); 
        \draw[->, thick, blue] (4.3, 0.7) to[out=135,in=315] (2.7, 1.2); 
        
        \draw[->, thick, red] (0.7, 3.3) to[bend left=25] (2.3, 1.7); 
        \draw[->, thick, red] (0.7, 2.3) to[out=315,in=135] (1.3, 1.7); 
        
        \draw[->, thick, red, dashed] (0.7, 3.3) to[out=315,in=135] (1.3, 2.7); 
        
    \end{tikzpicture}
    \caption{For $(a,b)=(5,7)$, if $D$ is the maximum hook, the blue cells are $13, 18$ (shaded blue) and the red cells are $9, 16$ (shaded red). The red arrow $(i,j)=(9,11)$ maps to $9$ via a parallel shift up to $(i',j')=(2,4)$.}
    \label{fig:main-bijection}
\end{figure}

    \medskip

    \noindent\textbf{Blue cells:} Define $\Phi_b(i,j)$ for $(i,j)\in N_b$ to be the cell $c$ at the intersection of the row of $i$ and the column of $j$. Because $j$ is to the northwest of $i$, $c$ is due west of $i$ and due south of $j$. Since $i \in D$ and $D$ is left-justified, $c \in D$. We have established a map $\Phi_b:N_b\to D_b$.

    To show $\Phi_b$ is injective with image $D_b$, we fix $c$ and reconstruct its preimage $(i,j)$. Since $j\in U$, the only choice of $j$ is $j=c-b\,(\mathrm{leg}_D(c)+1)$. As $i\to j$, the only choice of $i$ is $i=\mathrm{antiproj}_1(j)$, and it is valid (namely, $i\in D$) if and only if its value is at least the value of the rightmost cell of $D$ in the column of $c$, namely,
    \[ g(i)\geq g(c)-a\,\mathrm{arm}_D(c).\]
    By the characterization of $\mathrm{antiproj}_1(j)$ as the highest-valued cell in the bottom row with value at most $g(j)$, the inequality above is equivalent to
    \[ g(c)-b\,(\mathrm{leg}_D(c)+1) = g(j)\geq g(c)-a\,\mathrm{arm}_D(c), \]
    which simplifies to the defining condition of $D_b$.

    \medskip
    \noindent\textbf{Red cells:} Define $\Phi_r(i,j)$ for $(i,j)\in N_r$ as follows. Let $i'$ be the top cell of $D$ in the column of $i$. Let $j'=j-(i-i')$. Define $\Phi_r(i,j)$ to be the intersection $c$ of the column of $i'$ and the row of $j'$. It lies due south of $i'$ (which implies $c\in D$) and due west of $j'$. 

    To show $\Phi_r$ is injective with image $D_r$, we fix $c$ and reconstruct its preimage $(i,j)$. The only choice of $i'$ is the top cell of $D$ in the column of $c$, namely, $i'=c-b\,\mathrm{leg}_D(c)$. Since $i\to j$ and $g(j)-g(i)=g(j')-g(i')$, we have $i'\to j'$, so the only choice of $j'$ is $j'=\mathrm{proj}_{\mathrm{row}(c)}(i')$. As $j'$ is obtained by moving $j\in U$ up (nonstrictly), $j'$ is valid if and only if $j'\notin D$. A valid $(i',j')$ corresponds to a unique valid preimage $(i,j)$ by downward translation until $j'$ lands in $U$. 

    Hence, $\Phi_r$ is injective and $c$ lies in the image if and only if $j'\notin D$, or equivalently,
    \[ g(j')\leq g(c)-a\,(\mathrm{arm}_D(c)+1).\]
    By the characterization of $\mathrm{proj}_{\mathrm{row}(c)}(i')$, this is equivalent to
    \[g(c)-b\,\mathrm{leg}_D(c)=g(i')\leq g(c)-a\,(\mathrm{arm}_D(c)+1),\]
    which simplifies to the defining condition of $D_r$.

    \medskip
    \noindent\textbf{Minimal cell: } If $d$ were blue or red, then by the construction of $\Phi_b$ and $\Phi_r$, there is a cell $c\in D$ with $g(c)<g(d)$ (either to the due east of $d$ or due north of $d$), so $d$ cannot be minimal.
\end{proof}

\vspace{0.5em}

\begin{proof}[Proof of Theorem~\ref{thm:dinv-match}]
By Lemma~\ref{lem:deficit}, $Q(D)=|D|-|N_b|-|N_r|$. By Lemma~\ref{lem:main-bijection}, $|N_b|=|D_b|$ and $|N_r|=|D_r|$. It follows that $Q(D) = |D|-|D_b|-|D_r| = \mathtt{dinv}(D)$.
\end{proof}

\section{Proof of Theorem \ref{thm:bilinear-sum}}\label{sec:bilinear-sum}
We first recall some generalities about symmetric bilinear forms on $\mathbb{R}$-vector spaces. A bilinear form $B:V\times V\to \mathbb{R}$ on a vector space $V$ is said to be \emph{associated with} a quadratic form $Q:V\to \mathbb{R}$ if $B(v,v)=Q(v)$ for all $v\in V$. A bilinear form $B$ is said to be \emph{symmetric} if $B(v,v')=B(v',v)$ for all $v,v'\in V$. It is a standard fact that there is a unique symmetric bilinear form $B$ associated with $Q$, and it can be constructed in two equivalent way:
\begin{itemize}
    \item Set 
    \[ B(v,v')\coloneq \frac{1}{2}\Bigl(Q(v+v')-Q(v)-Q(v')\Bigr).\]
    \item Let $B'$ be any bilinear form associated with $Q$. Then we let $B$ be its symmetrization:
    \[ B(v,v')\coloneq \frac{1}{2}\Bigl(B'(v,v')+B'(v',v)\Bigr).\]
\end{itemize}

As is more practical for the purpose of Theorem~\ref{thm:bilinear-sum}, we shall use the second construction. Consider the bilinear form on $\mathbb{R}^G$:
\[ B'(\mathbf{n},\mathbf{n'})=\sum_{i,j\in G} \Bigl(\mathbf{1}_{0\le g(j)-g(i)<a}-\mathbf{1}_{b\le g(j)-g(i)<a+b}\Bigr)n_i n'_j.\]
Then $B'(\mathbf{n},\mathbf{n})=Q(\mathbf{n})$, so the symmetric bilinear form associated with $Q$ is given by
\[ B(\mathbf{n},\mathbf{n'})=\frac{1}{2}(B(\mathbf{n},\mathbf{n'})+B(\mathbf{n'},\mathbf{n})).\]
As before, for subdiagrams $D,E\subseteq G$, denote $B'(D,E)=B'(\mathbf{1}_D,\mathbf{1}_E)$ and $B(D,E)=B(\mathbf{1}_D,\mathbf{1}_E)$. Recall the definition of $U_D$ of any subdiagram $D$ from \eqref{eq:U}. For any two subsets $X,Y$ of $\mathbb{Z}^2$, define
\[ N(X,Y)\coloneq \{(i,j)\in X\times Y:i\to j\}.\]
\begin{lemma}
    \label{lem:B'}
    If $D,E$ are subdiagrams of $G$, then
    \[ B'(D,E)=|D|-|N(D,U_E)|.\]
\end{lemma}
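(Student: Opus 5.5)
The argument is the proof of Lemma~\ref{lem:deficit}, with the two copies of $D$ separated into the row variable $D$ and the column variable $E$. First expand the definition of $B'$ at $\mathbf{n}=\mathbf{1}_D$, $\mathbf{n'}=\mathbf{1}_E$. The factor $\mathbf{1}_{b\le g(j)-g(i)<a+b}$ equals $\mathbf{1}_{i\to j-b}$, so reindexing the second term by $j\mapsto j+b$ gives
\[ B'(D,E)=\sum_{\substack{(i,j)\in D\times \mathbb{Z}^2\\ i\to j}} \Bigl(\mathbf{1}_{j\in E}-\mathbf{1}_{j+b\in E}\Bigr). \]
The reindexing needs one check: when $j+b$ ranges over $E$, the shifted cell $j$ may leave $G$, so I sum over all $j\in\mathbb{Z}^2$. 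Since $E\subseteq G$, the two indicators vanish exactly where they should, and nothing is lost.

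Next apply the pointwise identity \eqref{eq:reduction to boundary} with $D$ replaced by $E$:
\[ \mathbf{1}_{j\in E}-\mathbf{1}_{j+b\in E}=\mathbf{1}_{j\in B}-\mathbf{1}_{j\in U_E}, \qquad B=\mathbb{Z}_{\ge1}\times\{1\}. \]
The four-case verification in the proof of Lemma~\ref{lem:deficit} uses only that $E$ is a subdiagram and the definition of $U_E$. It never uses the row variable $i$, so it carries over verbatim. Substituting,
\[ B'(D,E)=\#\{(i,j)\in D\times B: i\to j\}-|N(D,U_E)|. \]

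Finally, each $i\in D$ has exactly one $j\in B$ with $i\to j$, namely $j=\mathrm{proj}_1(i)$, which exists and is unique by division with remainder. So the first count is $|D|$, which gives the claim.

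There is no real obstacle. The only point needing care is confirming that the case analysis for \eqref{eq:reduction to boundary} depends only on $E$ and not on $D$, and that the shift by $b$ is harmless for cells outside $G$. Both follow from $E\subseteq G$ and from $E\cup(\mathbb{Z}_{\ge1}\times\mathbb{Z}_{\le0})$ being closed under moving south, which is the property already used for $U$.
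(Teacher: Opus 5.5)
Your proposal is correct and is essentially the paper's proof. You expand $B'$, shift the $\mathbf{1}_{i\to j-b}$ term so that $j$ ranges over all of $\mathbb{Z}^2$, apply \eqref{eq:reduction to boundary} with $E$ in place of $D$, and use $\mathrm{proj}_1$ to get $|N(D,B)|=|D|$.

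One point about the definition your argument relies on. As your closing remark implicitly assumes, $U_E$ must be read as the cells outside $E\cup(\mathbb{Z}_{\geq 1}\times\mathbb{Z}_{\leq 0})$ whose south neighbor lies in that region. Taken literally, formula \eqref{eq:U} would also put every cell of $\mathbb{Z}_{\geq 1}\times\mathbb{Z}_{\leq 0}$ into $U_E$; the boundary identity fails at those cells, and $N(D,U_E)$ would then be infinite.
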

\begin{proof}
    Similar to the proof of Lemma~\ref{lem:deficit}, 
    \[ B'(D,E)=\sum_{i\in D, j\in E} \mathbf{1}_{i\to j}-\mathbf{1}_{i\to j-b} = \sum_{\substack{(i,j)\in D\times \mathbb{Z}^2\\i\to j}} \mathbf{1}_{j\in E}-\mathbf{1}_{j+b\in E}.\]
    By \eqref{eq:reduction to boundary}, 
    \[ \mathbf{1}_{j\in E}-\mathbf{1}_{j+b\in E} = \mathbf{1}_{j\in B}-\mathbf{1}_{j\in U_E},\]
    where $B=\mathbf{Z}_{\geq 1}\times \{1\}$ is the bottom row, as before. Hence
    \[ B'(D,E) = \sum_{\substack{(i,j)\in D\times \mathbb{Z}^2\\i\to j}}\mathbf{1}_{j\in B}-\mathbf{1}_{j\in U_E} = N(D,B)-N(D,U_E).\]
    As in the proof of Lemma~\ref{lem:deficit}, $N(D,B)=|D|$, finishing the proof.
\end{proof}

\begin{proof}
    [Proof of Theorem~\ref{thm:bilinear-sum}]
    By preceding discussions, 
    \[ 2B(D,E)=B'(D,E)+B'(E,D)=|D|+|E|-|N(D,U_E)|-|N(E,U_D)|.\]
    Similar to the proof of Theorem~\ref{thm:dinv-match}, the key is to construct maps from $N(D,U_E)$ and $N(E,U_D)$ to $D\cup E$. Recall that an arrow $i\to j$ either points strictly to the northwest, points strictly to the southeast, or is a self-loop. We say an arrow is \emph{blue} if it points strictly to the northwest, and \emph{red} if it points strictly to the southeast or is a self-loop.\footnote{The treatment of self-loops is the ingredient new to Theorem~\ref{thm:dinv-match}.} Define $N_b(D,U_E)$ and $N_r(D,U_E)$ to be the set of blue arrows and the set of red arrows, respectively, and similarly define $N_b(E,U_D)$ and $N_r(E,U_D)$.

    Define $\Phi_{D,E}:N(D,U_E)\to D$ in a fashion similar to the proof of Lemma~\ref{lem:main-bijection}, namely:
    \begin{itemize}
    \item If $(i,j)$ is blue, then define $\Phi_{D,E}(i,j)$ to be the intersection of the row of $i$ and the column of $j$.
    \item If $(i,j)$ is red, then let $(i',j')$ be parallel to $(i,j)$, where $i'$ is the top cell of $D$ in the column of $i$. Define $\Phi_{D,E}(i,j)$ to be the intersection of the column of $i'$ and the row of $j'$.
    \end{itemize}  
    Define $\Phi_{E,D}:N(E,U_D)\to E$ similarly by symmetry.

    \medskip

    We now analyze the images of the blue and red arrows separately.

    \medskip

    \begin{itemize}
        \item Let 
        \[ D_b=\{c\in D\cap E:M^E_D(c)\leq a/b\}.\]
        Then we claim that $\Phi_{D,E}$ induces a bijection from $N_b(D,U_E)$ to $D_b$.
        
        Assume $\Phi_{D,E}(i,j)=c$ and $(i,j)$ is blue. Then $j$ is the unique element of $U_E$ in the column of $c$, and $i=\mathrm{antiproj}_{\mathrm{row}(c)}(j)$. Since $(i,j)$ is blue, $c$ is strictly to the south of $j$. This implies $c\in E$ and 
        \[j=c-b\,(\mathrm{leg}_E+1).\]
         
        The only candidate preimage $(i,j)$ is valid if and only if $i\in D$. This is equivalent to
        \[ g(c)-b\,(\mathrm{leg}_E+1)= g(j)\geq g(c)-a\, \mathrm{arm}_D(c),\]
        which is equivalent to $M^E_D(c)\leq a/b$, proving the claim.

        \item Let
        \[ D_r = (D\setminus E) \sqcup \{c\in D\cap E:m^D_E(c)\geq a/b\}.\]
        Then we claim that $\Phi_{D,E}$ induces a bijection from $N_r(D,U_E)$ to $D_r$.
        
        Assume $\Phi_{D,E}(i,j)=c$ and $(i,j)$ is red. Let $(i',j')$ be as in the construction of $\Phi_{D,E}(i,j)$. By construction, $i'$ must be the top cell of $D$ in the column of $c$, and $j'$ must be equal to $\mathrm{proj}_{\mathrm{row}(c)}(i')$. The only candidate $(i',j')$ corresponds to a valid preimage of $(i,j)$ if and only if $j'\notin E$. If $c\notin E$, this is automatic because $j'$ is nonstrictly to the east of $c$. If $c\in E$, the condition $j'\notin E$ is equivalent to 
        \[ g(c)-b\,\mathrm{leg}_D(c)=g(i')\leq g(c)-a\,(\mathrm{arm}_E(c)+1),\]
        which is equivalent to $m^D_E(c)\geq a/b$, finishing the proof of the claim.
    \end{itemize}

    By symmetry, defining
    \[ E_b=\{c\in D\cap E:M^D_E(c)\leq a/b\}\]
    and
    \[ E_r = (E\setminus D) \sqcup \{c\in D\cap E:m^E_D(c)\geq a/b\},\]
    then $\Phi_{E,D}$ restricts to bijections
    \[ N_b(E,U_D)\to E_b \text{ and }N_r(E,U_D)\to E_r.\]

    We are ready to prove the theorem. Denote 
    \[ \mathtt{dinv}^E_D=\#\{c\in D\cap E:m^E_D(c)<\frac{a}{b}<M^E_D(c)\}.\]
    Then note that
    \[ |D_b|+|E_r|=|E\setminus D|+(|D\cap E|-\mathtt{dinv}^E_D)=|E|-\mathtt{dinv}^E_D,\]
    and by symmetry
    \[ |D_r|+|E_b|=|D|-\mathtt{dinv}^D_E.\]
    Hence
    \begin{align*}
        2B(D,E)&=|D|+|E|-|N(D,U_E)|-|N(E,U_D)|\\
        &=|D|+|E|-|D_b|-|D_r|-|E_b|-|E_r|\\
        &=(|D|-|D_r|-|E_b|)+(|E|-|D_b|-|E_r|)\\
        &=\mathtt{dinv}^E_D + \mathtt{dinv}^D_E,
    \end{align*}
    as desired.
\end{proof}

\section{Proof of Theorem~\ref{thm:pos-def}}
To prove Theorem~\ref{thm:pos-def} using Theorem~\ref{thm:bilinear-sum}, it suffices to note that the cone $C_\mathbb{R}$ is generated by subdiagrams.
\begin{lemma}\label{lem:decompose}
    Let $\mathbf{n}\in C_{\mathbb{R}}$. Then there are an integer $0\leq k \leq |G|$, non-empty subdiagrams $D_1,\dots,D_k\subseteq G$, and positive real numbers $\lambda_1,\dots,\lambda_k$, such that
    \[ \mathbf{n}=\sum_{i=1}^k \lambda_i \mathbf{1}_{D_i}.\]
\end{lemma}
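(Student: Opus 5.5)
We use the standard layer-cake (superlevel set) decomposition. Given $\mathbf{n}\in C_{\mathbb{R}}$, list the distinct positive values attained by $\mathbf{n}$ as $0<t_1<t_2<\dots<t_k$. Since $G$ is finite, $k\le |G|$. Set $t_0=0$, and for $1\le m\le k$ define
\[ D_m=\{g\in G: n_g\geq t_m\},\qquad \lambda_m=t_m-t_{m-1}>0.\]
If $\mathbf{n}=0$, take $k=0$ and there is nothing to prove.

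We check three things in turn.

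First, each $D_m$ is a subdiagram. If $i\in D_m$ and $i\preceq j$, then $j-i\in\langle a,b\rangle$, so the defining condition of $C_{\mathbb{R}}$ gives $n_j\ge n_i\ge t_m$. Hence $j\in D_m$, and $D_m$ is upward closed in $(G,\preceq)$.

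Second, each $D_m$ is nonempty, since $t_m$ is attained by some coordinate of $\mathbf{n}$.

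Third, the decomposition recovers $\mathbf{n}$. Fix $g\in G$. If $n_g=0$, then $g$ lies in no $D_m$, because every $t_m>0$. If $n_g=t_p$ for some $p\ge 1$, then $g\in D_m$ exactly when $m\le p$. Summing, we get $\sum_{m\le p}\lambda_m=t_p-t_0=n_g$ by telescoping. Nonnegativity $n_i\ge 0$ is what makes $t_0=0$ the correct base level, so that no negative values need to be handled.

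There is no real obstacle here. The only point needing care is the direction of monotonicity: superlevel sets must be upward closed in the order where $n_j\ge n_i$ whenever $i\preceq j$, which is exactly the convention used for subdiagrams. The bound $k\le|G|$ follows because the number of distinct positive values is at most $|G|$.
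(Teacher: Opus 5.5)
Your proposal is correct and is essentially the paper's own proof: the paper uses the same canonical layer-cake decomposition, with $D_i=\{g\in G: n_g\ge c_i\}$ for the distinct nonzero values $c_1<\dots<c_k$ and $\lambda_i=c_i-c_{i-1}$. Your explicit checks (upward closedness from the cone condition, nonemptiness, and the telescoping sum) fill in exactly what the paper calls ``straightforward to check.''
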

\begin{proof}
    We provide a canonical construction. Let $\{c_1<c_2<\dots<c_k\}$ be the set of nonzero coordinates of $\mathbf{n}=(n_g)_{g\in G}$, removing duplicates. Let $c_0=0$. For $1\leq i\leq k$, define $\lambda_i=c_i-c_{i-1}>0$ and
    \[ D_i=\{ g\in G:n_g\geq c_i \}.\]
    Because $\mathbf{n}\in C_{\mathbb{R}}$, $D_i$ is a subdiagram of $G$. (In fact we have a strictly nested sequence $0\subsetneq D_1\subsetneq \dots \subsetneq D_k\subseteq G$, though we don't need this fact.) It is straightforward to check that $\mathbf{n}=\sum_{i=1}^k \lambda_i \mathbf{1}_{D_i}$ as required.
\end{proof}

\begin{proof}[Proof of Theorem \ref{thm:pos-def}]
For $\mathbf{n},\mathbf{n'}\in C_\mathbb{R}$, write $\mathbf{n}=\sum_{i=1}^k \lambda_i \mathbf{1}_{D_i}$ and $\mathbf{n'}=\sum_{j=1}^l \mu_j \mathbf{1}_{E_j}$ using Lemma~\ref{lem:decompose}, where $\lambda_i,\mu_j>0$ and $D_i, E_j$ are nonempty subdiagrams. Then the bilinearity of $B$ implies
\begin{equation}\label{eq:bilinear-sum}
    B(\mathbf{n},\mathbf{n'})=\sum_{i,j} \lambda_i\mu_j \mathtt{dinv}(D_i, E_j)\geq 0.
\end{equation}

To establish the effective bounds, we apply \eqref{eq:bilinear-sum} to $\mathbf{n'}=\mathbf{n}$ and ignore cross terms, getting
\[ Q(\mathbf{n})\geq \sum_{i=1}^k \lambda_i^2 \mathtt{dinv}(D_i).\]
By the last assertion of Theorem~\ref{thm:dinv-match}, $\mathtt{dinv}(D_i)\geq 1$. This gives
\[ Q(\mathbf{n}) \geq \sum_{i=1}^k \lambda_i^2.\]
On the other hand, the $L_\infty$ norm of $\mathbf{n}$ is equal to $\sum_{i=1}^k  \lambda_i$, as both equal to the coordinate of $\mathbf{n}$ at the southwest corner of $G$: indeed, the fact that $\mathbf{n}\in C_\mathbb{R}$ implies that the maximum coordinate of $\mathbf{n}$ occurs at the southwest corner $f$ of $G$, and the fact that every nonempty diagram contains $f$ implies that $n_f=\sum_{i=1}^k \lambda_i$. Applying the Cauchy--Schwarz inequality, and recalling $k\leq |G|$, we get
\begin{align*}
    \|\mathbf{n}\|_\infty^2 &= \left(\sum_{i=1}^k  1\cdot \lambda_i\right)^2 
    \leq \left(\sum_{i=1}^k 1^2\right) \left(\sum_{i=1}^k \lambda_i^2\right)  
    \leq  |G|\, Q(\mathbf{n}),
\end{align*}
and the proof is complete.
\end{proof}

\section{Acknowledgements}\label{sec:ack}
The author thanks Ken Ono for helpful comments on an earlier draft, and Kenny Lau for implementing the Lean verification and providing the appendix. The author acknowledges that interactions with generative AIs (Gemini 3 Pro and ChatGPT 5.3) have contributed intellectually to the paper in the following ways:
\begin{itemize}
    \item AI generated Mathematica codes for numerous visualized examples in the style of Figure~\ref{fig:main-bijection}, from which the author managed to observe the key bijections in Lemma~\ref{lem:main-bijection}, including the definition of the blue and red arrows.
    \item When AI was given the proof of Theorem~\ref{thm:dinv-match} and then asked to prove $Q(\mathbf{n})\geq 0$ for $\mathbf{n}\in C_\mathbb{R}$ (part of Theorem~\ref{thm:pos-def}), the idea of bilinear form on nested subdiagrams $E\subseteq D$ was suggested, though AI initially thought that evaluations of the raw, non-symmetrized bilinear form $B'(\mathbf{1}_D,\mathbf{1}_E)$ and $B'(\mathbf{1}_E,\mathbf{1}_D)$ were already nonnegative. However, upon further query, AI was able to produce a counterexample: in the setting of Remark~\ref{rmk:counterexample}, we have $B'(\mathbf{1}_D,\mathbf{1}_E)=-1$.
\end{itemize}

\section{Appendix: AxiomProver, by Kenny Lau}\label{sec:AxiomProver}
At Axiom Math, we are developing AxiomProver, an AI system for mathematical research via formal proof.
As an early test case in this effort, we present this case study,
treating Theorem~\ref{thm:bilinear-sum} as an end-to-end formalization target.
We chose it because we believed that it is within reach of today's Mathlib, and it is the main engine of this paper.

This paper confirms that expectation: the proof is fully formalized in Lean/Mathlib
(see \cite{Lean,Mathlib2020})
and was produced by AxiomProver from a natural-language statement
of Theorem~\ref{thm:bilinear-sum}.
We now make precise what ``produced'' means, and describes the end-to-end pipeline we used.

\subsection*{Process}
The formal proofs provided in this work were developed and verified using \textbf{Lean 4.28.0}.
Compatibility with earlier or later versions is not guaranteed due to the evolving nature of the Lean 4 compiler and its core libraries.
The relevant files are all posted in the following repository:
\begin{center}
  \url{https://github.com/AxiomMath/quadratic-dinv}
\end{center}
The input files were
\begin{itemize}
  \item \texttt{quadratic-form-dinv.pdf} (not included): a first draft of the paper (which includes among other things Theorem~\ref{thm:bilinear-sum} and the proofs)
  \item a \texttt{task.md} that contains the single line
  \begin{quote}
    \slshape
    Prove Theorem 1.2
  \end{quote}
  \item a configuration file \texttt{.environment} that contains the single line
  \begin{quote}
    \slshape
    lean-4.28.0
  \end{quote}
  which specifies to AxiomProver which version of Lean should be used.
\end{itemize}
Given these three input files,
AxiomProver autonomously provided the following output files:
\begin{itemize}
  \item \texttt{problem.lean}, a Lean 4.28.0 formalization of the problem statement
which includes all the relevant definitions; and
  \item \texttt{solution.lean}, a complete Lean 4.28.0 formalization of the proof.
\end{itemize}

\medskip

The reader can verify these files using AXLE (Axiom Lean Engine), a tool developed by Axiom Math:
\begin{itemize}
\item Go to \url{https://axle.axiommath.ai/verify_proof}.
\item Select \texttt{lean-4.28.0} as the \texttt{environment}.
\item Put the content of \texttt{problem.lean} into the field \texttt{formal\_statement}.
\item Put the content of \texttt{solution.lean} into the field \texttt{content}.
\end{itemize}
This ensures that \texttt{solution.lean} contains correct proofs to theorems in \texttt{problem.lean} that were left open, and that it has not changed any of the definitions.

This section about AI is completely orthogonal to the rest of the paper, which is written without the use of AI except for the interactions described in Section~\ref{sec:ack}.
Indeed, a research paper is a narrative designed to communicate ideas to humans,
whereas a Lean files are designed to satisfy a computer kernel.

\end{document}